\documentclass[12pt,reqno]{amsart}
\usepackage{hyperref}
\usepackage{amssymb,amsthm}
\usepackage{amsfonts,cmtiup,comment,stmaryrd}

\usepackage{mathrsfs,cmtiup}

\makeatletter
\def\blfootnote{\xdef\@thefnmark{}\@footnotetext}
\makeatother

\newtheorem{theorem}{Theorem}[section]
\newtheorem{lemma}[theorem]{Lemma}

\theoremstyle{definition}

\newtheorem*{definition*}{Definition}
\newcommand{\ed}{\end{document}}

\renewcommand{\geq}{\geqslant}
\renewcommand{\leq}{\leqslant}

\let\leq=\leqslant
\let\geq=\geqslant
\numberwithin{equation}{section}

\begin{document}

\title[restricted centralizers of powers]{Profinite groups with restricted centralizers of powers}

\author{Cristina Acciarri}
\address{C.~Acciarri: Dipartimento di Scienze Fisiche, Informatiche e Matematiche, Universit\`a degli Studi di Modena e Reggio Emilia, Via Campi 213/b, I-41125 Modena, Italy}
\email{cristina.acciarri@unimore.it}

\author{Pavel Shumyatsky}

\address{P. Shumyatsky: Department of Mathematics, University of Brasilia, DF~70910-900, Brazil}
\email{pavel@unb.br}

\thanks{The first author is member of ``National Group for Algebraic and Geometric Structures, and Their Applications'' (GNSAGA–INdAM). The second author was partially  supported  by the Conselho Nacional de Desenvolvimento Cient\'{\i}fico e Tecnol\'ogico (CNPq),  and Funda\c c\~ao de Apoio \`a Pesquisa do Distrito Federal (FAPDF), Brazil.}

\keywords{ Profinite groups, centralizers,  FC-elements}
\subjclass[2020]{20E18,  20F24}

\begin{abstract}
 A group $G$ is said to have restricted centralizers if for every $x\in G$ the centralizer $C_G(x)$ either is finite or has finite index in $G$. Shalev showed that a profinite group with restricted centralizers is virtually abelian. Here we take interest in profinite groups $G$ for which there is an integer $n$ such that $C_G(x^n)$ is either finite or open whenever $x\in G$. It is shown that such a group $G$ has an open normal subgroup $T$ with the property that $G/Z(T)$ has finite exponent.
\end{abstract}

\maketitle

\section{Introduction}

A group $G$ is said to have restricted centralizers if for every $g\in G$ the centralizer $C_G(g)$ either is finite or has finite index in $G$. This notion was introduced by Shalev in \cite{shalev} where he showed that a profinite group with restricted centralizers is virtually abelian. We say that a profinite group has a property virtually if it has an open subgroup with that property. The article \cite{DMS_20} handles profinite groups with restricted centralizers of $w$-values for a multilinear commutator word $w$. The main result of \cite{DMS_20} says that the verbal subgroup $w(G)$ is virtually abelian. More recently, it was shown in \cite{accshu} that if $\pi$ is a set of primes and $G$ is a profinite group with restricted centralizers of $\pi$-elements, then $G$ has an open subgroup of the form $P\times Q$, where $P$ is an abelian pro-$\pi$ subgroup and $Q$ is a pro-$\pi'$ subgroup.

Here we take interest in profinite groups $G$ for which there is an integer $n$ such that $C_G(x^n)$ is either finite or open whenever $x\in G$. This case is really quite different from the aforementioned ones.

\begin{theorem}\label{main}
Let $n$ be a positive integer and $G$ a profinite group in which the centralizer of $x^n$ is either finite or open for every $x\in G$. Then $G$ has an open normal subgroup $T$ such that $G/Z(T)$ has finite exponent. 
\end{theorem}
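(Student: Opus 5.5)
We split into two cases according to whether $G$ contains an element $x$ such that $x^n$ has infinite order; the main tools are the Baire category theorem and FC-type arguments.

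\emph{Case 1: $G$ is torsion.} Then $G$ is a torsion profinite group, and the sets $\{x\in G : x^{n}\text{ has finite order}\}$ cover $G$. Consider the closed sets
\[ S_i=\{x\in G : |C_G(x^n)|\le i\}\quad\text{and}\quad R_i=\{x\in G : |G:C_G(x^n)|\le i\}. \]
One checks in the standard way that these are closed. By hypothesis $G=\bigcup_i (S_i\cup R_i)$. By Baire's theorem some $S_i$ or $R_i$ contains a coset $aN$ of an open normal subgroup $N$.

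If some $S_i$ contains an open coset, we want to conclude that $G$ is finite, or at least that the relevant piece is trivial. Indeed, for $y\in N$ the element $(ay)^n$ has centralizer of order at most $i$. Elements $x$ with $C_G(x^n)$ finite force $\langle x\rangle$ to be finite, since $x\in C_G(x^n)$. Using a coset argument as in Shalev's paper, we aim to show that $N$ is then finite. The idea is that infinite profinite $N$ contains infinite abelian or procyclic subgroups centralizing some $(ay)^n$; this part needs care.

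The main case is therefore when $aN\subseteq R_i$. Then the elements $(ay)^n$, $y\in N$, are FC-elements with bounded index of centralizer. We then use a standard "Neumann-type" argument: $\langle (ay)^n: y\in N\rangle$ contains $x^n$ for many elements. Passing to $a=1$ via the usual trick (for $y,z\in N$, compare $ay$ and $az$), the goal is that the set of FC-elements
\[ \mathrm{FC}(G)=\{x : |G:C_G(x)|<\infty\} \]
contains all $n$-th powers of elements of an open subgroup.

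\emph{Key step.} Show that the subgroup $F$ generated by all $x^n$ with $C_G(x^n)$ open, intersected with a suitable open subgroup, is an FC-group with bounded centralizer indices (a BFC-group). By Neumann's theorem its derived subgroup is finite. Then take $T$ to be an open normal subgroup centralizing this finite commutator subgroup, arranged so that $T$ centralizes the $n$-th powers modulo bounded data. Consequently $G/Z(T)$ has exponent dividing $n\cdot e$, where $e$ is the exponent of a finite section.

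\emph{Case 2: some $x^n$ has infinite order.} Then $C_G(x^n)\supseteq\langle x\rangle$ is infinite, so $C_G(x^n)$ is open. In this case we would show that elements whose $n$-th power has finite-centralizer are few, namely lying in a nowhere-dense set. Baire then gives an open subgroup on which the $R_i$ alternative applies uniformly.

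\textbf{Main obstacle.} The main obstacle is passing from "$(ay)^n$ has bounded-index centralizer for $y$ in a coset" to "$y^n$ for $y$ in an open subgroup lies in a BFC-subgroup." The approach I would try is to consider $\langle a\rangle N$. Its elements are of the form $a^k y$, so their $n$-th powers lie in finitely many cosets. Iterating Baire over these cosets would give bounded indices for all $n$-th powers in an open subgroup $H$. Then $H^n$ is generated by elements with centralizers of index $\le m$, and a Neumann-type lemma (products of boundedly many such elements) yields that $[H^n,H]$ is controlled. Finally, choose $T$ open normal inside $C_G([H^n,H^n])$, adjusted so that $x^{n}$-powers with a bounded extra exponent centralize $T$.
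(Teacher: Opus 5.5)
Your proposal has two genuine gaps, and one of its intermediate claims is false. In the torsion case, when some $S_i$ contains an open coset $aN$, you aim to show that $N$ is finite. This fails. Let $A$ be an infinite elementary abelian $3$-group (a Cartesian product of copies of $C_3$), let $t$ of order $2$ act on $A$ by inversion, and put $G=A\rtimes\langle t\rangle$, $n=1$. For $x\in A$ the centralizer $C_G(x)\geq A$ is open, while $C_G(yt)=\{1,yt\}$ for every $y\in A$. So $G$ is torsion and satisfies the hypothesis, and $tA\subseteq S_2$ is an open coset of $N=A$. Yet $N$ is infinite and $C_N(ty)=1$ for all $y\in N$, so no infinite subgroup of $N$ centralizes these elements.

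An element with finite centralizer does not force finiteness; it forces virtual nilpotency, and that needs deep input. The paper first uses Zelmanov's theorem to make $G$ locally finite. It then picks an $n$th power $a$ of minimal order with $C_G(a)$ finite, so that $C_G(a^p)$ is open and $a$ induces on it an automorphism of prime order $p$ with finite centralizer, and applies Khukhro's theorem. In an infinite open nilpotent subgroup every centralizer contains the infinite centre, so all $n$th powers there are FC-elements and one is back in the FC case. In the non-torsion case the reduction is elementary, and simpler than your nowhere-density plan: if $g$ has infinite order, then for every $y\in D=C_G(g^n)$ we have $g^n\in C_G(y^n)$, so $C_G(y^n)$ is infinite and hence open.

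The second gap is in your ``main case'', which is where the real difficulty lies. The passage from the coset $aN$ to $N$ fails because $\{x: x^n\in FC(G)\}$ is not closed under products, so comparing $ay$ and $az$ says nothing about $(y^{-1}z)^n$. Baire gives interior in one coset only, so iterating over the cosets of $\langle a\rangle N$ yields no uniform bound.

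Even granting $|G:C_G(x^n)|\leq m$ for all $x$ in an open subgroup, a subgroup generated by elements with at most $m$ conjugates need not be FC. In a Cartesian product of copies of $S_3$, the transpositions supported on one coordinate have $3$ conjugates each, but they topologically generate the whole product, which is not FC. So Neumann's theorem is not available. And even if $F'$ were finite, an open $T$ centralizing $F'$ gives no reason for $x^{ne}$ to centralize $T$.

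What is missing is the paper's key input. By Lemma~\ref{rel_prob_positive}, $x^n\in FC(G)$ gives $Pr(\langle x\rangle,G)>0$, and Theorem~\ref{virtually_central_by_torsion} then yields an open normal $T$ with $G/C_G(T)$ torsion. Lemma~\ref{first} upgrades torsion to finite exponent, using Baire on the sets $\{g\in G^n: g^s=1\}$ and the fact that finite products of $n$th powers are FC-elements. Finally $Z(T)=T\cap C_G(T)$ has finite index in $C_G(T)$. Without that theorem or a genuine substitute, your argument does not reach the conclusion.
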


Recall that a group $G$ is said to have finite exponent if there is a positive number $e$ such that $x^e=1$ for every $x\in G$.

The proof of the above theorem is short but it uses a number of highly nontrivial tools. In particular, it uses Zelmanov's theorem that a compact torsion group is locally finite \cite{ze}, which in turn depends on Wilson's reduction theorem \cite{wi} as well as on the classification of finite simple groups. An important role is also played by the theorem of Khukhro that a locally finite group admitting an automorphism of prime order with finite centralizer is virtually nilpotent \cite{khukhro_90}, \cite{khukhro_93}. Our proof of Theorem \ref{main} also depends on some recent results of probabilistic nature (see \cite{AzShu, AzShu2}).

Observe that Theorem \ref{main} provides a rather detailed information on the structure of profinite groups with restricted centralizers of $n$th powers. 

In particular, it shows that such a group $G$ has an abelian normal subgroup $N$ for which $G/N$ has finite exponent. 
Moreover, using a theorem of Mann \cite{Mann2007}, it is easy to deduce that the commutator subgroup $T'$ has finite exponent and therefore $G$ is (finite exponent)-by-abelian-by-finite.

\section{Proof of Theorem \ref{main}}

Throughout, by a subgroup of a topological group we mean closed subgroup. Whenever $X$ is a subset of a topological group, $\langle X\rangle$ stands for the subgroup generated by $X$. We write $\mu_G$ to denote the normalized Haar measure on a compact group $G$ (see \cite[Chapter 4]{hr_1}  or \cite[Chapter II]{nachbin}). If $K$ is a subgroup of a finite (or compact) group $G$, then  the probability that a random element of $G$ commutes with a random element of $K$ is denoted by $Pr(K,G)$. In \cite{AzShu} the following theorem was established. 

\begin{theorem}\label{virtually_central_by_torsion}Let $K$ be a subgroup of a compact group $G$. Then $Pr(\langle x\rangle, G)>0$ for any $x \in K$ if and only if $G$ has an open normal subgroup $T$ such that $K/C_K(T)$ is torsion.
\end{theorem}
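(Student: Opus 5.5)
The proof has two directions. The ``if'' direction is a direct measure computation. For ``only if'', I would first show that each $x\in K$ has some power with open centralizer in $G$, and then make this uniform in $x$ by a Baire category argument. The uniformisation is the step I expect to be hardest.

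\emph{Sufficiency.} Suppose $T$ is open normal and $K/C_K(T)$ is torsion. Fix $x\in K$ and choose $m\geq 1$ with $x^m\in C_K(T)$. The closed subgroup $\langle x^m\rangle$ has index at most $m$ in the procyclic group $\langle x\rangle$, so $\mu_{\langle x\rangle}(\langle x^m\rangle)\geq 1/m$. Every pair in $\langle x^m\rangle\times T$ commutes. Hence
\[
Pr(\langle x\rangle,G)\geq \frac{1}{m\,|G:T|}>0 .
\]

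\emph{Necessity, pointwise step.} Fix $x\in K$ and set $X=\langle x\rangle$. By Fubini, the set of $g\in G$ such that $C_X(g)$ has positive Haar measure in $X$ has positive measure in $G$. A closed subgroup of positive measure is open, so for each such $g$ the subgroup $C_X(g)$ has finite index in $X$. In particular some power $x^k$ centralizes $g$. Thus this positive-measure set lies in $\bigcup_{k\geq 1}C_G(x^k)$. By countable additivity, some $C_G(x^k)$ has positive measure, and hence is open. So every element of $K$ has a power which is an FC-element of $G$.

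\emph{Necessity, uniformisation.} For $k,i\geq 1$ put
\[
K_{k,i}=\{x\in K : |G:C_G(x^k)|\leq i\}.
\]
These sets are closed, since the set of elements with at most $i$ conjugates is closed. By the pointwise step, $K=\bigcup_{k,i}K_{k,i}$. The Baire category theorem then gives $k,i$, an element $a\in K$ and an open normal subgroup $U$ of $G$ such that $a(U\cap K)\subseteq K_{k,i}$.

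From here I would pass to the subgroup $D$ generated by the elements of $G$ having at most $i'$ conjugates, for a suitable $i'$ depending on $i$ and $|K:U\cap K|$. By Neumann's theorem, $D'$ is finite of bounded order, and $D$ has a large ``uniformly FC'' part. The aim is then to show that for every $x\in K$ some bounded power of an appropriate power of $x$ lies in $D$. For this I would use the coset $a(U\cap K)$ and its translates, together with finiteness of $K/(U\cap K)$.

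Finally I would choose $T$ as an open normal subgroup of $G$ contained in $U$ which centralizes $D$ modulo the finite normal subgroup $D'$. Finiteness of $D'$ then lets one take a further power so that $x^{m}$ actually centralizes $T$.

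The main obstacle is this last passage: converting ``each element of a coset has a $k$-th power with at most $i$ conjugates'' into a single open normal $T$ centralized by powers of \emph{all} elements of $K$. The difficulty is that $D$ need not be finitely generated, so $C_G(D)$ need not be open. To handle this, I would first try a BFC-type argument (Neumann/Wiegold bounds) on the subgroup generated by the $k$-th powers in $a(U\cap K)$, so as to obtain an open subgroup with finite commutator with these powers.
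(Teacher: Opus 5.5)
The paper gives no proof of this theorem: it is quoted from \cite{AzShu}. Two parts of your proposal are complete and correct: the ``if'' direction, and the pointwise fact that each $x\in K$ has a power $x^k$ with $C_G(x^k)$ open. Together these are essentially Lemma \ref{rel_prob_positive}. Your Fubini argument for the pointwise step is a valid, self-contained substitute for the paper's appeal to \cite[Proposition 1.1]{AzShu2}: a closed subgroup of positive Haar measure is open, and countable subadditivity applies over $\bigcup_k C_G(x^k)$. But the real content of the theorem is the passage to a single open normal $T$ that works for all of $K$. There the proposal has a genuine gap: the step is only described, and the tools you name do not do the job.

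\textbf{The Neumann step is false.} You claim that $D'$ is finite of bounded order by Neumann's theorem. That theorem needs a bound on the conjugacy classes of \emph{all} elements of the group (or boundedly many such generators, via Schur). A group merely generated by elements with at most $i'$ conjugates can have infinite derived subgroup. Take $G=\prod_{n\in\mathbb{N}}S_3$ and $i'\geq 3$. The elements with at most $i'$ conjugates are those with at most $\lfloor\log_2 i'\rfloor$ nontrivial coordinates. They generate the dense subgroup $\bigoplus_n S_3$, whose derived subgroup $\bigoplus_n A_3$ is infinite. Worse, no open normal $T$ can centralize $D$ modulo any finite normal subgroup. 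By density this would force $[G,T]$ to be finite, whereas $[G,T]\supseteq\prod_{n\geq N}A_3$ for some $N$. Yet the theorem holds trivially there with $T=G$, because $G$ is torsion. So $T$ cannot come from (almost) centralizing the FC-elements. One must exploit that only \emph{some power} of each $x$ has to centralize $T$.

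\textbf{The coset data cannot determine $T$.} Your Baire step gives uniform control on only one coset $a(U\cap K)$. Take $G=K=\mathbb{Z}_p\rtimes\langle t\rangle$ with $t$ inverting $\mathbb{Z}_p$. Every element of $t\mathbb{Z}_p$ satisfies $x^2=1$, so Baire may return $k=2$, $i=1$, $a=t$, $U=\mathbb{Z}_p$. Yet every admissible $T$ must lie in $\mathbb{Z}_p$, since $C_G(z^m)=\mathbb{Z}_p$ for every nontrivial $z\in\mathbb{Z}_p$ and $m\geq 1$. That constraint comes entirely from the other coset, where you have no uniform bound. Your phrase ``use the coset and its translates'' is exactly the missing argument.

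\textbf{The Baire step is only profinite.} The statement is for compact groups, where open normal subgroups need not form a base at $1$; a connected group has none other than $G$. So even the Baire step as written applies only to profinite $G$.
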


Recall that the set of the elements of a group $G$ having centralizer of finite index is denoted by $FC(G)$.
The previous result of probabilistic nature is relevant to our study of profinite groups with restricted centralizers of $n$th powers of elements  in view of the following observation.

\begin{lemma}\label{rel_prob_positive}
Let $G$ be a compact group and $x\in G$. Then $Pr(\langle x\rangle, G)>0$ if and only if there is a positive integer $n$ such that $C_G(x^n)$ is open in $G$.
\end{lemma}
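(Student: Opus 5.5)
We plan to compute $Pr(\langle x\rangle,G)$ directly as a Haar-measure integral and reduce both directions to the behaviour of the centralizers $C_G(y)$ for $y\in\langle x\rangle$. Write $X=\langle x\rangle$, a compact abelian (procyclic, or finite) subgroup. By Fubini,
\[
Pr(X,G)=\int_X \mu_G(C_G(y))\,d\mu_X(y).
\]
A closed subgroup of a compact group has positive Haar measure if and only if it is open, in which case the measure is $1/|G:C_G(y)|$. So $Pr(X,G)>0$ if and only if the set $\{y\in X : C_G(y) \text{ is open}\}=X\cap FC(G)$ has positive $\mu_X$-measure. This set is a subgroup of $X$, since $C_G(yz)\geq C_G(y)\cap C_G(z)$ and $C_G(y^{-1})=C_G(y)$. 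It is the countable union of the sets $\{y\in X: |G:C_G(y)|\le k\}$, each of which is closed. To see closedness, note that $y$ commutes with every element of some subgroup of index at most $k$ exactly when the conjugacy class of $y$ has at most $k$ elements, and the condition "at most $k$ conjugates" is closed in a compact group by a standard compactness argument.

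For the "if" direction, suppose $C_G(x^n)$ is open. Then $\langle x^n\rangle\le X\cap FC(G)$, because every element of $\langle x^n\rangle$ is a limit of powers of $x^n$ and so centralizes the open subgroup $C_G(x^n)$. Now $\langle x^n\rangle$ has finite index at most $n$ in $X$: indeed $X=\bigcup_{i<n}x^i\langle x^n\rangle$, and the right-hand side is closed and contains $x$. Hence $\mu_X(\langle x^n\rangle)\ge 1/n>0$, and the integral is at least $\frac1n\mu_G(C_G(x^n))>0$.

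For the "only if" direction, suppose $Pr(X,G)>0$. Then the subgroup $H=X\cap FC(G)$ has positive measure. By countable additivity, some closed piece $\{y\in X: |G:C_G(y)|\le k\}$ has positive measure. The subgroup generated by this piece is contained in $H$, but we need a closed subgroup of positive measure in order to use the fact that it is open in $X$. I expect this to be the main obstacle. I would handle it with the Steinhaus-type fact: if $A\subseteq X$ is measurable with $\mu_X(A)>0$, then $AA^{-1}$ contains an open neighbourhood of $1$ in $X$. Taking $A$ to be the closed piece above, every element of $AA^{-1}$ has index of centralizer at most $k^2$, so $H$ contains an open subgroup $U$ of $X$ (profinite groups have a base of open subgroups at $1$; in general compact groups, $H$ contains an open neighbourhood of $1$ and hence is an open subgroup of $X$). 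An open subgroup of the compact group $X$ has finite index $n$, so $x^n\in H$, that is, $C_G(x^n)$ is open, which completes the proof.
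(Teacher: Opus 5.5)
Your proof is correct. For the ``if'' direction it is essentially the paper's computation. The paper integrates over $G$ and uses $\mu_{\langle x\rangle}(C_{\langle x\rangle}(g)\langle x^n\rangle)\ge 1/n$; you integrate over $X$ and use $\mu_X(\langle x^n\rangle)\ge 1/n$. Both arrive at the bound $1/(nk)$. One small point: when you conclude that $\bigcup_{i<n}x^i\langle x^n\rangle$ contains $X$, you should also note that this union is a subgroup. It is one, because $X$ is abelian and $x^n\in\langle x^n\rangle$.

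The ``only if'' direction is where your route genuinely differs. The paper quotes Proposition~1.1 of \cite{AzShu2}, a Neumann-type result giving open subgroups $T\le G$ and $B\le\langle x\rangle$ with $[B,T]$ finite. It then reads off that $C_G(x^n)$ is open for a generator $x^n$ of $B$. You instead prove what is needed directly:
\begin{itemize}
\item You write $X\cap FC(G)$ as the increasing union of the closed sets $A_k=\{y\in X:|G:C_G(y)|\le k\}$.
\item You pick $k$ with $\mu_X(A_k)>0$.
\item You apply Steinhaus--Weil to get a neighbourhood of $1$ in $X$ inside $A_kA_k^{-1}\subseteq A_{k^2}$. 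For a compact set this is elementary: choose an open $U\supseteq A_k$ with $\mu_X(U)<2\mu_X(A_k)$ and a neighbourhood $V$ of $1$ with $VA_k\subseteq U$.
\item Hence the subgroup $X\cap FC(G)$ is open, so it has finite index $m$ in the abelian group $X$, and $x^m$ lies in it.
\end{itemize}

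What your approach buys is self-containment. It uses only Fubini, the fact that $FC(G)$ is a subgroup, and a standard measure-theoretic lemma. The same argument shows that $Pr(K,G)>0$ forces $K\cap FC(G)$ to be open in $K$ for any closed subgroup $K$ of $G$. Monotheticity of $\langle x\rangle$ is used only at the very end, to extract a power of $x$.

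The paper's approach is shorter on the page but rests on an external theorem. That theorem gives the stronger, more uniform conclusion that $[B,T]$ is finite, which is more than this lemma needs.
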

\begin{proof}  Suppose that for some positive integer $n$ the centralizer $C_G(x^n)$ is open in $G$, and let $k$ be the index $[G:C_G(x^n)]$. We have
 \begin{equation*}
    \begin{split}
Pr(\langle x\rangle, G)=& \int\limits_{G} \mu_{\langle x\rangle}(C_{\langle x\rangle}(g)) d\mu_G(g) = \int\limits_{G} \mu_{\langle x\rangle}(C_{\langle x\rangle}(g)\langle x^n\rangle) \mu_{\langle x^n\rangle}(C_{\langle x^n\rangle}(g))d\mu_G(g) \\
& \geq \int\limits_{G} \frac{1}{n}\mu_{\langle x^n\rangle}(C_{\langle x^n\rangle}(g))d\mu_G(g)=\frac{1}{n}Pr(\langle x^n\rangle, G)=\frac{1}{nk}>0.    
   \end{split}
 \end{equation*}

For the other implication, assume that $Pr(\langle x\rangle, G)>0$. By \cite[Proposition 1.1]{AzShu2} there are open subgroups $T\leq G$ and $B\leq \langle x\rangle$ such that $[B,T]$ is finite.  Since $B$ is open in $\langle x\rangle$, there is a positive integer $n$ such that  $x^n$ is a generator of  $B$. It follows from the finiteness of $[B,T]$ that $C_T(x^n)$ has finite index in $T$. Combining this  with the fact that $T$ is open in $G$, we deduce that the centralizer $C_G(x^n)$ has finite index in $G$ and so is open, as desired.
\end{proof}

It is a longstanding open problem, raised in 1970 by Hewitt and Ross \cite{hr}, whether every compact torsion group has finite exponent. In some cases the answer is known to be positive. We will need the following lemma. 

\begin{lemma}\label{first} Let $n$ be a positive integer and $G$ a  profinite torsion group in which $x^n\in FC(G)$ for any $x\in G$. Then $G$ has finite exponent.
\end{lemma}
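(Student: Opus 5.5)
The plan is to reduce to a situation where a Baire category argument, together with Neumann's theory of BFC-groups, produces a normal subgroup of finite exponent. First, by Zelmanov's theorem \cite{ze} the compact torsion group $G$ is locally finite; in particular every finitely generated subgroup is finite, and quotients of $G$ inherit both the torsion property and the hypothesis on $n$th powers. Next, for positive integers $k,e$ consider
\[
X_{k,e}=\{x\in G : x^e=1 \text{ and } [G:C_G(x^n)]\leq k\}.
\]
Each $X_{k,e}$ is closed. The condition $x^e=1$ is clearly closed. The condition $|(x^n)^G|\leq k$ is the standard closed condition, since the set of $y$ with at most $k$ conjugates is closed and $x\mapsto x^n$ is continuous. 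By hypothesis, and because $G$ is torsion, $G=\bigcup_{k,e}X_{k,e}$. By the Baire category theorem some $X_{k,e}$ contains a coset $aN$ of an open normal subgroup $N$. Since $aN$ has only finitely many $G$-conjugates, all of them cosets of $N$, we may also assume the union $Y$ of these conjugate cosets is a normal subset with the same bounds $k,e$.

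Let $D$ be the subgroup generated by $\{x^n : x\in Y\}$. It is normal, and it is generated by elements with at most $k$ conjugates. By Neumann's theorem (and Wiegold's bound) its derived subgroup $D'$ is finite, and $D/Z(D)$ has finite exponent bounded in terms of $k$. Moreover, $Z(D)$ is an abelian torsion profinite group, so it has finite exponent. Indeed, the subgroups $\{z : z^i=1\}$ are closed subgroups covering $Z(D)$, so by Baire one of them is open, and an abelian torsion group that is finite-by-(bounded exponent) has bounded exponent. Hence $D$ has finite exponent, and it suffices to show that $G/D$ has finite exponent. Passing to $\bar G=G/D$, the hypothesis survives, and now $\bar x^{\,n}=1$ for all $\bar x$ in the coset $\bar a\bar N$.

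The main obstacle is this last step: going from bounded exponent on a single coset of an open normal subgroup to bounded exponent on the whole group, which is exactly the kind of step the Hewitt--Ross problem shows cannot be done for free. My first attempt is to exploit that $Y$ is a union of cosets containing $aN$ and that $\bar G/\bar N$ is finite, say of order $m$. For $y\in N$, consider products of the form $(ay)(ay')^{-1}$, which lie in $N$. The aim is to express every element of $N$ via a bounded number of elements of $aN$ in a finite subgroup $\langle a,y\rangle$, and to use the $n$th-power FC hypothesis again on those. If that fails, I would instead iterate the above argument: apply Baire inside each of the finitely many cosets $gN$ of $N$ (each is an open, hence Baire, space), producing for each coset a bounded-exponent normal subgroup $D_g$ as above. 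Then quotient by the product of the $D_g$, which still has finite exponent, being a finite product of normal subgroups of finite exponent in a locally finite group. In the quotient, I would argue that $N$ itself acquires bounded exponent, so that $\bar G$ is (finite exponent)-by-finite and hence of finite exponent.
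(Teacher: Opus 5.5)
\textbf{There is a genuine gap.} The decisive step, passing from bounded exponent on a single coset $aN$ to bounded exponent on all of $G$, is never carried out, and neither of your suggestions closes it. Writing $y=a^{-1}(ay)$ with $y\in N$ shows elements of $N$ as products of two elements of order dividing $e$, which says nothing about the order of $y$. Running Baire inside each coset $gN$ only yields smaller subcosets $g'N'$ of bounded exponent; these neither cover $G$ nor force $N$ to have bounded exponent. The sentence ``I would argue that $N$ itself acquires bounded exponent'' is exactly what has to be proved. The detour through $D$ does not touch this difficulty, since $x^e=1$ on $aN$ was already built into $X_{k,e}$.

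There is also a smaller error there. Neumann's theorem does not give $D'$ finite: a group topologically generated by a normal set of elements with at most $k$ conjugates can have infinite derived subgroup, e.g.\ $\prod_{i\in\mathbb N}S_3$. What is true, and is enough for $D$ to have finite exponent, is that $d^{k!}$ centralizes each generator $x^n$ ($x\in Y$), so $D/Z(D)$ has exponent dividing $k!$.

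The missing idea is to make the coset representative lie in $FC(G)$, so that its centralizer carries the exponent bound from the coset to a subgroup of finite index. The paper does this by applying Baire not in $G$ but in $K=G^n$, the closed subgroup generated by all $n$th powers. Since $G/K$ has exponent dividing $n$, it suffices to treat $K$.
\begin{itemize}
\item Baire gives $g\in K$ and an open normal $N$ with $(gx)^i=1$ for all $x\in N_0=K\cap N$.
\item $N_0$ is open in $K$, and the abstract subgroup generated by $n$th powers is dense in $K$. So the coset $gN_0$ contains a finite product of $n$th powers, and since $FC(G)$ is a subgroup one may take $g\in FC(G)$.
\item Then $C_{N_0}(g)$ has finite index in $K$. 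For $x\in C_{N_0}(g)$ one gets $1=(gx)^i=g^ix^i=x^i$, using $g^i=1$.
\item Hence $K$ has a finite-index subgroup of exponent dividing $i$, and so $K$ has finite exponent.
\end{itemize}
In your set-up $a$ is an arbitrary element of $G$ with no reason to lie in $FC(G)$, which is exactly why you get stuck. Zelmanov's theorem is not needed for this lemma.
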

\begin{proof} Let $K=G^n$ be the subgroup generated by the $n$th powers of elements of $G$. Since $G/K$ has finite exponent, it is sufficient to show that $K$ has finite exponent. For every positive integer $s$, let $X_s$ denote the set of all elements $g\in K$ such that $g^s=1$. Obviously the sets $X_s$ are closed in $G$ and the union of $X_s$ is exactly $K$. Therefore, by Baire Category Theorem \cite[Theorem 34]{kelley}, at least one of the sets $X_s$ contains non-empty interior. Hence, there is an integer $i$, an element $g\in K$ and an open normal subgroup $N$ of $G$ such that  $g(K\cap N)$ is contained in $X_i$ and so $(gx)^i=1$ whenever $x\in K\cap N$. Set $N_0=K\cap N$.

Thus, $N_0$ is a normal subgroup of $G$ having finite index in $K$. Every element of $K$ can be written as a product of (possibly infinitely many) $n$th powers of elements of $G$. Since $N_0$ has finite index in $K$, every element of $K$ can be written as a product of finitely many $n$th powers of elements of $G$ and an element of $N_0$. It follows that the coset $gN_0$ contains an element that is a a product of finitely many $n$th powers of elements of $G$. Without loss of generality we can assume that $g$ is such an element. Since all $n$th powers are contained in $FC(G)$, we deduce that $g\in FC(G)$.

In particular, $C_{N_0}(g)$ has finite index in $N_0$. Since $(gx)^i=1$ whenever $x\in N_0$, it follows that every element of $C_{N_0}(g)$ has order dividing $i$. In other words, the exponent of $C_{N_0}(g)$ divides $i$. Taking into account that $N_0$ has finite index in $K$ and $C_{N_0}(g)$ has finite index in $N_0$, we conclude that $K$ has finite exponent. This completes the proof.
\end{proof}

We are now ready to prove Theorem \ref{main}. 
\begin{proof}
Recall that $G$ is a profinite group in which the centralizer of $x^n$ is either finite or open for every $x\in G$. We need to show that $G$ has an open normal
subgroup $T$ such that $G/Z(T)$ has finite exponent. 

Suppose first that $x^n\in FC(G)$ for all $x\in G$.
In view of Lemma \ref{rel_prob_positive} this means that  $Pr(\langle x\rangle, G)>0$ for any $x\in G$. It follows from Theorem \ref{virtually_central_by_torsion} that $G$ has an open normal subgroup $T$ such that   $\bar{G}=G/C_G(T)$ is torsion. In view of Lemma \ref{first} the quotient $\bar{G}$ has finite exponent. Note that $Z(T)$ is open in $C_G(T)$ since $Z(T)=T\cap C_G(T)$. Therefore $G/Z(T)$ has finite exponent, as desired.

Thus, without loss of generality we assume that $G$ contains at least one element $x$ such that $C_G(x^n)$ is finite. If $G$ has an element  $g$ of infinite order, set $D=C_G(g^n)$. Observe that $D$ contains $\langle g^n\rangle$. Therefore $D$ is infinite and hence open in $G$. Moreover, for any $y\in D$ the centralizer $C_D(y)$ is infinite because it contains $\langle g^n\rangle$. It follows that $C_D(y^n)$ is open in $G$ and so $y^n\in FC(G)$ for any $y\in D$. In view of the above we conclude that $D$ has an open normal subgroup $K$ such that $D/Z(K)$ has finite exponent. Of course, $K$ can be chosen normal in $G$ and so  $G/Z(K)$ has finite exponent, as desired. 

Therefore we assume now that $G$ does not contain elements of infinite order, that is, $G$ is torsion. Moreover, by Zelmanov's theorem, $G$ is locally finite. Choose an $n$th power $a=b^n\in G$ of minimal possible order such that $C_G(a)$ is finite. For some prime $p$  the order of  $a^p$ is smaller than that of $a$ and so by the minimality of $|a|$ the centralizer $C_G(a^p)$ is open in $G$. Note that $a$ induces an automorphism of order $p$ on $C_G(a^p)$ and the centralizer $C_{C_G(a^p)}(a)$ is finite.  Khukhro's theorem \cite[5.4.1 Corollary]{khukhro_93} now tells us that $C_G(a^p)$ is virtually nilpotent. Thus, $G$ is virtually nilpotent. Without loss of generality we can assume that $G$ is nilpotent and infinite.  Since the centre of an infinite nilpotent profinite group is always  infinite (see for instance \cite[Lemma 2.1]{DMS_23}), we deduce that  $C_G(x)$ is infinite for any $x\in G$.  Thus $[G:C_G(x^n)]$ is finite for any  $x\in G$. Since $C_G(a)$ is finite, it follows that $G$ is finite, a contradiction. The proof is now complete.
\end{proof}

Remark that the proof shows that when $G$ is a virtually nilpotent profinite group containing a subset $X$ such that $C_G(x)$ is either finite or open for every $x\in X$, then  $X\subseteq FC(G)$.

\end{document}